
\documentclass{amsart}

\newtheorem{theorem}{Theorem}

\theoremstyle{definition}

\theoremstyle{remark}
\newtheorem{remark}[theorem]{Remark}

\numberwithin{equation}{section}



\usepackage{hyperref}

\begin{document}
\title{Universality of Newton's method}

\author{A. G. Ramm}

\address{Mathematics Department, Kansas State University,
Manhattan, KS 66506-2602, USA}

\email{ramm@math.ksu.edu}

\subjclass[2000]{47J05, 47J07, 58C15}

\keywords{Implicit function, Newton's method, DSM (Dynamical systems method)}

\date{}
\maketitle

\section{Introduction}

In many cases one is interested in solving operator equation
\begin{equation}
\label{eq1.1}
F(u) = h
\end{equation}
where $F$ is a nonlinear operator in real Hilbert space $H$. 
Let us assume that equation \eqref{eq1.1} has a 
solution $y$,
\begin{equation}
\label{eq1.2}
F(y) = f,
\end{equation}
that the Fr\'{e}chet derivative $F'(y)$ exists 
and is boundedly invertible:
\begin{equation}
\label{eq1.3}
\|[F'(y)]^{-1}\|\le m,\qquad m=const >0.
\end{equation}
Let us also assume that $F'(u)$ exists 
in the ball $B(y,R):=\{u:\|u-y\|\le R\}$, 
depends continuously on $u$, and $\omega(R)$ is its modulus of 
continuity in the ball $B(y,R)$:
\begin{equation}
\label{eq1.4}
\sup_{u,v\in B(y,R), \, \|u-v\|\le r} \|F'(u) - F'(v)\| = \omega(r). 
\end{equation}
The function $\omega(r)\ge 0$ is assumed to be continuous on the interval 
$[0, 2R]$, 
strictly increasing, and  $\omega(0)=0$.

A widely used method for solving equation \eqref{eq1.1} is the Newton
method: 
\begin{equation} \label{eq1.5} u_{n+1} = u_n -
[F'(u_n)]^{-1}F(u_n), \qquad u_0=z,   
\end{equation} 
where $z$ is an initial approximation.
Sufficient condition for the
convergence of the iterative scheme \eqref{eq1.5} to the solution $y$ of
equation \eqref{eq1.1} are proposed in \cite{D}, \cite{KA}, \cite{OR},
\cite{R499}, and references therein. These conditions in most cases
require a Lipschitz condition for $F'(u)$, a sufficient closeness of 
the
initial approximation $u_0$ to the solution $y$, and other conditions
( see, for example, \cite{D}, p.157.

In \cite{R499} a general method, the Dynamical Systems Method (DSM) is 
developed for solving equation \eqref{eq1.2}.

This method consists of finding a nonlinear operator $\Phi(t,u)$ such that 
the Cauchy problem
\begin{equation}
\label{eq1.6}
\dot{u} = \Phi(t,u),\qquad u(0) = u_0,
\end{equation}
has a unique global solution $u=u(t;u_0)$, there exists
$u(\infty) = \lim_{t\to\infty}u(t;u_0)$, and $F(u(\infty))=f$:
\begin{equation}
\label{eq1.7}
\exists ! u(t),\quad \forall t\ge 0;\quad \exists u(\infty);
\quad F(u(\infty)) = f.  
\end{equation}

Many examples of the possible choices of $\Phi(t,u)$ are given in \cite{R499}. 
Theoretical applications of the DSM are proposed in \cite{R489}, \cite{R575}. 
A particular choice of $\Phi$, namely, $\Phi = - [F'(u)]^{-1}(F(u)-h)$, 
leads to a continuous analog of the Newton method:
\begin{equation}
\label{eq1.8}
\dot{u}(t) = - [F'(u(t))]^{-1}(F(u(t))-h),\qquad u(0) = u_0; \qquad 
\dot{u}(t) = 
\frac{du(t)}{dt}.
\end{equation}

The question of general interest is: under what assumptions on $F,h$ and 
$u_0$, can one establish the conclusions \eqref{eq1.7}, that is, the 
global existence and uniqueness of the
solution to problem \eqref{eq1.8}, the existence of $u(\infty)$, 
and the relation $F(u(\infty))=h?$

The usual condition, sufficient for the local existence and uniqueness of 
the solution to \eqref{eq1.8} is the
local Lipschitz condition on the right-hand side of \eqref{eq1.8}. 
Such condition can be satisfied if $F'(u)$ satisfies a Lipschitz condition.

Our goal is to develop a novel approach to a study of equation \eqref{eq1.8}. 
This approach does not require a Lipschitz condition for $F'(u)$, and it 
leads to a justification of the conclusion \eqref{eq1.7} 
(with $h$ replacing $f$) for the solution to problem \eqref{eq1.8} 
under natural assumptions on $h$ and $u_0$. 

{\it Apparently for the first time a  proof of convergence of the 
continuous 
analog \eqref{eq1.8} of the Newton method and of the usual Newton 
method \eqref{eq1.5} is given 
without any smoothness assumptions on $F'(u)$, only the local continuity 
of $F'(u)$ is assumed, see \eqref{eq1.4}.} 

The Newton-type methods are widely used in theoretical, numerical and 
applied research, and
by this reason our results are of general interest for a wide audience.

Our results demonstrate the universality of the Newton method in the 
following sense: we prove that any operator equation \eqref{eq1.1}
can be solved by either the usual Newton method \eqref{eq1.5}
or by the DSM Newton method \eqref{eq1.8}, provided that
conditions \eqref{eq1.2}-\eqref{eq1.4} hold, the initial 
approximation $u_0$ is sufficiently close to $y$, where $y$ is the 
solution
of equation \eqref{eq1.2}, and the right-hand side $h$ 
in \eqref{eq1.1} is sufficiently close to $f$.
Precise formulation of the results is given in three Theorems.

The basic tool in this paper is a new version of the inverse function theorem. 
The novelty of this version is in a specification of the region in which 
the inverse function exists in terms of the modulus of continuity
of the operator $F'(u)$ in the ball $B(y,R)$. 

In Section~\ref{section2} we formulate and prove this version of the 
inverse function theorem. The result is stated as Theorem~\ref{theorem1}. 

In Section~\ref{section3} we justify the DSM for equation \eqref{eq1.8}. 
The result is stated in Theorem~\ref{theorem2}. 

In Section~\ref{section4} we prove convergence of the usual
Newton method \eqref{eq1.5}. The result is stated in 
Theorem~\ref{theorem3}.

\section{Inverse function theorem}
\label{section2}
Consider equation \eqref{eq1.1}. 

Let us make the following {\it Assumptions A):}
\begin{enumerate}
\item{}
Equation \eqref{eq1.2} and estimates \eqref{eq1.3}, \eqref{eq1.4} hold
in $B(y,R)$, 
\item{}
$h\in B(f,\rho),\qquad \rho = \frac{(1-q)R}{m}, \qquad q\in (0,1),$
\item{}
$m\omega(R) = q,  \qquad q\in (0,1)$.
\end{enumerate}

Assumption (3) defines $R$ uniquely because $\omega(r)$ is 
assumed to be strictly increasing. We assume that equation 
$m\omega(R) = q$ has a solution. This assumption is 
always satisfied if $q\in (0,1)$ is sufficiently small. The constant 
$m$ is defined in \eqref{eq1.3}.

Our first result, Theorem 1, says that under 
{\it Assumptions A)} equation \eqref{eq1.1} is uniquely solvable for any
$h$ in a sufficiently small neighborhood of $f$.

\begin{theorem}
\label{theorem1}
If  Assumptions A) hold then equation \eqref{eq1.1} has a unique 
solution 
$u$ for any $h\in B(f,\rho)$, and
\begin{equation}
\label{eq2.1}
\|[F'(u)]^{-1}\| \le \frac {m}{1-q},\qquad \forall u\in B(y,R). 
\end{equation}
\end{theorem}

\begin{proof}
Let us denote 
$$Q:=[F'(y)]^{-1}, \qquad \|Q\|\leq m.$$ 
Then equation 
\eqref{eq1.1} is equivalent to 
\begin{equation}
\label{eq2.2}
u =T(u), \qquad T(u):= u - Q(F(u) - h).
\end{equation}
Let us check that $T$ maps the ball $ B(y,R)$ into itself:
\begin{equation}
\label{eq2.3}
TB(y,R) \subset B(y,R),
\end{equation}
and that $T$ is a contraction mapping in this ball:
\begin{equation}
\label{eq2.4}
\|T(u) - T(v)\| \le q \|u-v\|, \qquad \forall u,v\in B(y,R),
\end{equation}
where $q\in (0,1)$ is defined in {\it Assumptions A)}.

If \eqref{eq2.2} and \eqref{eq2.3} are verified, then 
the contraction mapping principle 
guarantees existence and uniqueness of the solution to equation 
\eqref{eq2.2} in 
$B(y,R)$, where $R$ is defined by condition 3) in {\it Assumptions A)}. 

Let us check the inclusion \eqref{eq2.3}. One has 
\begin{equation}
\label{eq2.5}
J_1:= \|u - y - Q(F(u) - h)\| = \|u - y - Q[F(u) -F(y) +f - h]\|,
\end{equation}
and 
\begin{equation}
\label{eq2.6}
\begin{split}
F(u) - F(y) &= \int_0^1 F'(y+s(u-y)) ds (u-y) \\
&= F'(y)(u - y) 
+ \int_0^1 [F'(y+s(u-y)) - F'(y)]ds(u - y).
\end{split}
\end{equation}
Note that 
$$\|Q(f-h)\|\le m\rho,$$
and 
$$\sup_{s\in [0,1]}\|F'(y+s(u-y)) - F'(y)\|\le \omega(R).$$
Therefore, for any $u\in B(y,R)$ one gets from \eqref{eq1.3}, 
\eqref{eq2.4} and \eqref{eq2.5} the following estimate:
\begin{equation}
\label{eq2.7}
J_1 \le m\rho + m\omega(R)R \le (1-q)R+qR=R,
\end{equation}
where the inequalities
\begin{equation}
\label{eq2.8}
\|f-h\| \le \rho, \quad \|u-y\| \le R,
\end{equation}
and assumptions 2) and 3) in {\it Assumptions A)} were used.

Let us establish inequality \eqref{eq2.4}:
\begin{equation}
\label{eq2.9}
J_2 := \|T(u) - T(v)\| = \|u - v - Q(F(u) - F(v))\| 
\end{equation}

\begin{equation}
\label{eq2.10}
F(u) - F(v) = F'(y)(u - v) 
+ \int_0^1 [F'(v+s(u-v)) - F'(y)]ds(u - v).
\end{equation}
Note that 
$$\|v+s(u-v)-y\|= \|(1-s)(v-y)+s(u-y)\|\leq (1-s)R+sR=R.$$ 
Thus, from \eqref{eq2.9} and \eqref{eq2.10} one gets
\begin{equation}
\label{eq2.11}
J_2\le m \omega(R)\|u-v\| \le q\|u-v\|,\qquad \forall u,v\in 
B(y,R).
\end{equation}
Therefore, both conditions \eqref{eq2.3} and \eqref{eq2.4} are verified.
Consequently, the existence of the unique solution to \eqref{eq1.1}
in $B(y,R)$ is proved. \hfill $\Box$

Let us prove estimate \eqref{eq2.1}. One has
\begin{equation}
\label{eq2.12}
\begin{split}
[F'(u)]^{-1} & = [F'(y) + F'(u) - F'(y)]^{-1}\\
& = [I + (F'(y))^{-1}(F'(u) - F'(y))]^{-1}[F'(y)]^{-1},
\end{split}
\end{equation}
and
\begin{equation}
\label{eq2.13}
\|(F'(y))^{-1}(F'(u) - F'(y))\| \le m \omega(R) \le q,\qquad 
u\in B(y,R). 
\end{equation}
It is well known that if a linear operator $A$ satisfies the estimate 
$\|A\|\le q$,
where $q\in (0,1)$, then the inverse operator $(I+A)^{-1}$
does exist, and $\|(I+A)^{-1}|\le \frac 1 {1-q}$.
Thus, the operator $[I + (F'(y))^{-1}(F'(u) - F'(y))]^{-1}$ exists and
its norm can be estimated as follows: 
\begin{equation}
\label{eq2.14}
\|[I + (F'(y))^{-1}(F'(u) - F'(y))]^{-1}\| \le \frac {1}{1-q}.
\end{equation}
Consequently, \eqref{eq2.12} and \eqref{eq2.14} 
imply \eqref{eq2.1}.\hfill $\Box$ 

Theorem~\ref{theorem1} is proved. 
\end{proof}

\begin{remark}
\label{remark1}
If $h=h(t)\in C^1([0,T])$, then the solution $u=u(t)$ of equation \eqref{eq1.1} 
is $C^{1}([0,T])$ provided that Assumptions A) hold. 

Indeed, if $h=h(t)$, then a formal differentiation of equation 
\eqref{eq1.1} with respect to $t$ yields: 
\begin{equation}
\label{eq2.15}
F'(u(t))\dot{u}(t) = \dot{h}(t). 
\end{equation}
Since $u(t)\in B(y,R)$, the operator $F'(u(t))$ is boundedly invertible 
and depends continuously on 
$t$ because $u(t)$ does. Thus, 
$$\dot{u}(t) = [F'(u(t))]^{-1}\dot{h}(t),$$ 
so $\dot{u}(t)$ depends  on $t$ continuously.
 
The formal differentiation is justified if one proves that $u(t)$ 
is differentiable at any $t\in [0,T]$, that is,
\begin{equation}
\label{eq2.16}
u(t+k) - u(t) = A(t)k + o(k),\qquad k\to 0, \qquad t\in [0,T],
\end{equation}
where $A(t)\in H$ does not depend on $k$ and at the ends of the interval
$[0,T]$ the derivatives are understood as one-sided.

To establish relation \eqref{eq2.16} one uses equation \eqref{eq1.1}
and the assumption $h\in C^1([0,T])$. One has:
\begin{equation}
\label{eq2.17}
F(u(t+k)) - F(u(t)) = h(t+k) - h(t) = \dot{h}(t)k + o(k), \qquad k\to 0,
\end{equation}
and
\begin{equation}
\label{eq2.18}
F(u(t+k)) - F(u(t)) = \int_0^1 F'(u(t) + s(u(t+k)-u(t)))ds (u(t+k) - u(t)).
\end{equation}
The operator $\int_0^1 F'\bigg{(}u(t)+ s(u(t+k) - u(t))\bigg{)}ds$ 
is boundedly invertible (uniformly with respect to $k\in(0,k_0)$, 
where $0<k_0$ is a sufficiently small number) as long as 
$$\sup_{s\in[0,1]}\|u(t)+s(u(t+k)-u(t))-y\|\leq R,$$ 
see \eqref{eq2.1}. This inequality holds, as one can easily check:
$$\|u(t)+s(u(t+k)-u(t))-y\|=\|(1-s)(u(t)-y)+s(u(t+k)-y)\|\leq 
(1-s)R+sR=R.$$  
Therefore, \eqref{eq2.16} follows from \eqref{eq2.17} and \eqref{eq2.18}. 
Remark~\ref{remark1} is proved. \hfill $\Box$
\end{remark}

\section{Convergence of the DSM \eqref{eq1.8}}
\label{section3}

Consider the following equation 
\begin{equation}
\label{eq3.1}
F(u) = h + v(t),
\end{equation}
where
\begin{equation}
\label{eq3.2}
u = u(t),\quad v(t) = e^{-t} v_0,\quad v_0:= F(u_0) - h,\quad r = \|v_0\|.
\end{equation}
At $t=0$ equation \eqref{eq3.1} has a unique solution $u_0$. 

Let us make the following {\it Assumptions B)}:
\begin{enumerate}
\item{} {\it Assumptions A)} hold,
\item{} 
$h\in B(f,\delta),\qquad \delta +r\le \rho:=\frac{(1-q)R}{m}.$
\end{enumerate}

\begin{theorem}
\label{theorem2}
If Assumptions B) hold, then conclusions \eqref{eq1.7}, with $f$ replaced 
by $h$, hold for the solution 
of problem \eqref{eq1.8}.
\end{theorem}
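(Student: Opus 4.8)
The plan is to reduce the Cauchy problem \eqref{eq1.8} to the one-parameter family of implicit equations \eqref{eq3.1} and to solve each of these by Theorem~\ref{theorem1}, thereby bypassing the usual ODE existence theory, which would demand a Lipschitz right-hand side. First I would verify that for every $t\ge 0$ the right-hand side of \eqref{eq3.1} lies in the admissible neighborhood of $f$: since $v(t)=e^{-t}v_0$, one has $\|h+v(t)-f\|\le \|h-f\|+e^{-t}\|v_0\|\le \delta+r\le\rho$ by Assumptions B). Hence Theorem~\ref{theorem1} applies for each fixed $t$ and produces a unique solution $u(t)\in B(y,R)$ of \eqref{eq3.1}, together with the uniform bound $\|[F'(u(t))]^{-1}\|\le m/(1-q)$ from \eqref{eq2.1}.

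Next I would show that this $u(t)$ is in fact the global solution of \eqref{eq1.8}. Since $v(t)=e^{-t}v_0\in C^1([0,\infty))$, the right-hand side $h+v(t)$ of \eqref{eq3.1} is $C^1$, so Remark~\ref{remark1} guarantees $u\in C^1$ and, upon differentiating \eqref{eq3.1}, yields $F'(u(t))\dot u(t)=\dot v(t)=-v(t)$. Because $v(t)=F(u(t))-h$ by \eqref{eq3.1}, this gives exactly $\dot u(t)=-[F'(u(t))]^{-1}(F(u(t))-h)$ with $u(0)=u_0$, i.e. \eqref{eq1.8}. This solution exists for all $t\ge 0$, so global existence is immediate: it is supplied directly by Theorem~\ref{theorem1} at each $t$ rather than by continuation of a flow.

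For uniqueness I would argue in reverse, which is where the absence of a Lipschitz condition is circumvented. Given any solution $u(t)$ of \eqref{eq1.8} that remains in $B(y,R)$, set $w(t):=F(u(t))-h$; then $\dot w(t)=F'(u(t))\dot u(t)=-(F(u(t))-h)=-w(t)$, so $w(t)=e^{-t}w(0)=e^{-t}v_0=v(t)$. Thus every solution of \eqref{eq1.8} satisfies \eqref{eq3.1}, and uniqueness in Theorem~\ref{theorem1} forces it to coincide with the constructed $u(t)$. A short bootstrapping argument (the solution starts at $u_0\in B(y,R)$ and, by the identity $F(u(t))=h+v(t)\in B(f,\rho)$, cannot leave $B(y,R)$ by Theorem~\ref{theorem1}) confirms that any solution does stay in the ball, so uniqueness is global.

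Finally, to establish the existence of $u(\infty)$ and the relation $F(u(\infty))=h$, let $u_\infty\in B(y,R)$ be the unique solution of $F(u)=h$ furnished by Theorem~\ref{theorem1} (here $\|h-f\|\le\delta\le\rho$). Subtracting $F(u_\infty)=h$ from $F(u(t))=h+v(t)$ and writing the difference as $\int_0^1 F'(u_\infty+s(u(t)-u_\infty))\,ds\,(u(t)-u_\infty)$, I would invert the averaged derivative exactly as in Remark~\ref{remark1}, its inverse being bounded by $m/(1-q)$ since the whole segment lies in $B(y,R)$, to obtain $\|u(t)-u_\infty\|\le \frac{m}{1-q}\|v(t)\|=\frac{mr}{1-q}e^{-t}\to 0$. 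Hence $u(\infty)=u_\infty$ exists and $F(u(\infty))=h$. The main obstacle throughout is the equivalence between \eqref{eq1.8} and \eqref{eq3.1}, and in particular the uniqueness step: the linear decay $\dot w=-w$ for $w=F(u)-h$ is the device that replaces the missing Lipschitz hypothesis.
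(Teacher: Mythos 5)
Your proposal is correct, and its first half --- reducing \eqref{eq1.8} to the family of stationary equations \eqref{eq3.1}, checking $\|h+v(t)-f\|\le\delta+r\le\rho$, invoking Theorem~\ref{theorem1} pointwise in $t$ and Remark~\ref{remark1} for differentiability, then differentiating \eqref{eq3.1} to recover \eqref{eq1.8} --- is exactly the paper's argument; your explicit uniqueness step (setting $w:=F(u)-h$, deriving $\dot w=-w$, hence $w=v$, hence any $B(y,R)$-valued solution of \eqref{eq1.8} solves \eqref{eq3.1}) merely spells out what the paper compresses into the assertion that \eqref{eq3.3} is ``equivalent'' to \eqref{eq1.8}. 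Where you genuinely diverge is in establishing the existence of $u(\infty)$ and the relation $F(u(\infty))=h$. The paper bounds $\|\dot u(t)\|\le\frac{mr}{1-q}e^{-t}$ using \eqref{eq2.1}, obtains $u(\infty)$ from the Cauchy criterion (completeness of $H$), and then gets $F(u(\infty))=h$ by passing to the limit in \eqref{eq3.1} using continuity of $F$. You instead identify the limit in advance as the unique solution $u_\infty\in B(y,R)$ of $F(u)=h$ (available from Theorem~\ref{theorem1} since $\|h-f\|\le\delta\le\rho$), and prove $\|u(t)-u_\infty\|\le\frac{mr}{1-q}e^{-t}$ by inverting the averaged derivative $\int_0^1 F'\bigl(u_\infty+s(u(t)-u_\infty)\bigr)\,ds$, whose inverse is bounded by $m/(1-q)$ via the same Neumann-series argument as in \eqref{eq2.12}--\eqref{eq2.14}. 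Your route buys a little more: the limit is characterized a priori, $F(u(\infty))=h$ is automatic rather than a separate continuity argument, and the rate \eqref{eq3.6} drops out at once; the paper's route is lighter, needing only the derivative estimate and completeness. One caveat you share with the paper: identifying the solution of \eqref{eq3.1} at $t=0$ with $u_0$ (as in your bootstrapping parenthetical) tacitly requires $u_0\in B(y,R)$, since the uniqueness supplied by Theorem~\ref{theorem1} holds only within that ball; the paper's Remark~\ref{remark2} glosses over this same point.
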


\begin{proof} 

{\it 1. Proof of the global existence and uniqueness of the 
solution to problem \eqref{eq1.8}}.

One has 
$$\|h+v(t) - f\|\le\|h-f\|+\|v_0e^{-t}\|\le \delta+r\leq \rho, \qquad 
  \forall  t\ge 0.$$ 
Thus, it follows from Theorem~\ref{theorem1} 
that equation \eqref{eq3.1} has a unique solution 
$$u=u(t)\in B(y,R)$$
defined on the interval $t\in[0,\infty)$, and 
$u(t)\in C^1([0,\infty))$.
 
Differentiation of \eqref{eq3.1} with respect to $t$ yields 
\begin{equation}
\label{eq3.3}
F'(u)\dot{u} = \dot{v} = -v = - (F(u(t)) - h).
\end{equation}
Since $u(t)\in B(y,R)$, the operator $F'(u(t))$ is boundedly invertible, 
so equation \eqref{eq3.3} is 
equivalent to \eqref{eq1.8}. The initial condition $u(0)=u_0$ is 
satisfied, as was mentioned below \eqref{eq3.2}.
Therefore, the existence of the unique 
global solution to \eqref{eq1.8} is proved. \hfill $\Box$

{\it 2. Proof of the existence of $u(\infty)$.}

From \eqref{eq3.1}, \eqref{eq3.2},
 \eqref{eq2.1}, and \eqref{eq1.8} it follows that 
\begin{equation}
\label{eq3.4}
\|\dot{u}\| \le \frac{mr}{1-q}e^{-t}, \qquad q\in (0,1).
\end{equation}
This and the Cauchy criterion for the existence of the limit 
$u(\infty)$ imply that $u(\infty)$ exists. \hfill $\Box$

Integrating \eqref{eq3.4}, one gets 
\begin{equation}
\label{eq3.5}
\|u(t) - u_0\| \le \frac{mr}{1-q},
\end{equation}
and 
\begin{equation}
\label{eq3.6}
\|u(\infty) - u(t)\| \le \frac{mr}{1-q} e^{-t}.
\end{equation}

{\it 3. Proof of the relation $F(u(\infty)) = h$.}

Let us now prove that 
\begin{equation}
\label{eq3.7}
F(u(\infty)) = h.
\end{equation}
Relation \eqref{eq3.7} follows from \eqref{eq3.1} and \eqref{eq3.2} 
as $t\to\infty$, because $v(\infty) = 0$, $u(t)\in B(y,R)$, and $F$ is 
continuous in $B(y,R)$. \hfill  $\Box$
 
Theorem~\ref{theorem2} is proved. 
\end{proof}

\begin{remark}
\label{remark2}
Let us explain  why there is no assumption on the location of $u_0$
in Theorem~\ref{theorem2}. The reason is simple: in the proof 
of  Theorem~\ref{theorem2} it was established that $u(t)\in B(y,R)$
for all $t\geq 0$. Therefore, $u(0)\in B(y,R)$. 
\end{remark}

\section{The Newton method}
\label{section4}

The main goal in this Section is to prove convergence of the Newton method 
\begin{equation}
\label{eq4.1}
u_{n+1} = u_n - [F'(u_n)]^{-1}(F(u_n) - f),\qquad u_0 = z,
\end{equation}
to the solution $y$ of equation \eqref{eq1.2} 
{\it without any additional assumptions on the smoothness of $F'(u)$}.
By $z\in H$ we denote an initial approximation.

\begin{theorem}
\label{theorem3}
Assume that \eqref{eq1.2}--\eqref{eq1.4} and {\it Assumptions A)} hold, 
and that 
\begin{equation}
\label{eq4.2}
m\omega(R)=q\in (0, \frac 1 2), \qquad q_1\|z-y\|\leq R, \qquad q_1:=\frac 
{q}{1-q}.
\end{equation} 
Then process \eqref{eq4.1} converges to $y$.
\end{theorem}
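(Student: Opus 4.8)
The plan is to show that the error $u_n - y$ contracts geometrically with a fixed ratio $q_1<1$, so that $u_n\to y$. First I would convert one Newton step into an error identity. Since $f=F(y)$, the update \eqref{eq4.1} reads
\[
u_{n+1}-y = u_n-y-[F'(u_n)]^{-1}\big(F(u_n)-F(y)\big),
\]
and writing $F(u_n)-F(y)=\int_0^1 F'(y+s(u_n-y))\,ds\,(u_n-y)$ exactly as in \eqref{eq2.6} turns this into
\[
u_{n+1}-y = [F'(u_n)]^{-1}\int_0^1\big[F'(u_n)-F'(y+s(u_n-y))\big]\,ds\,(u_n-y).
\]
Everything would be driven from this identity, which displays the new error as the product of the inverse derivative, the deviation of $F'$ along the segment $[y,u_n]$, and the old error.

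Next I would estimate the three factors under the inductive hypothesis $u_n\in B(y,R)$. For $s\in[0,1]$ the point $y+s(u_n-y)$ lies in $B(y,R)$ and is at distance $(1-s)\|u_n-y\|\le R$ from $u_n$, so the modulus of continuity \eqref{eq1.4} gives $\|F'(u_n)-F'(y+s(u_n-y))\|\le\omega(R)$. Inserting this together with the inverse bound $\|[F'(u_n)]^{-1}\|\le m/(1-q)$ from Theorem~\ref{theorem1} (estimate \eqref{eq2.1}) and using $m\omega(R)=q$ from \eqref{eq4.2}, the identity yields
\[
\|u_{n+1}-y\|\le \frac{m}{1-q}\,\omega(R)\,\|u_n-y\|=\frac{q}{1-q}\,\|u_n-y\|=q_1\,\|u_n-y\|.
\]
The hypothesis $q\in(0,\tfrac12)$ is used here and only here: it is precisely what makes $q_1=q/(1-q)<1$. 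In the previous theorems $q<1$ sufficed, but the Lipschitz-free setting replaces the usual quadratic Newton estimate by this linear contraction, which closes only when $q_1<1$.

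Finally I would run the containment induction. The first step is governed by $q_1\|z-y\|\le R$ from \eqref{eq4.2}: granting that the contraction estimate may be applied at $u_0=z$, it gives $\|u_1-y\|\le q_1\|z-y\|\le R$, so $u_1\in B(y,R)$. Thereafter $q_1<1$ propagates the inclusion, since $u_n\in B(y,R)$ forces $\|u_{n+1}-y\|\le q_1\|u_n-y\|\le q_1R\le R$. Iterating the contraction gives $\|u_n-y\|\le q_1^{\,n}\|z-y\|\to 0$, which is the claimed convergence of \eqref{eq4.1} to $y$. I expect the main obstacle to be exactly this containment bookkeeping: the estimates \eqref{eq1.3}, \eqref{eq1.4} and \eqref{eq2.1} are available only inside $B(y,R)$, so one must verify that every iterate, and the whole segment $[y,u_n]$ over which $F'$ is integrated, remains in that ball — in particular that the first iterate is brought inside by the condition on $z$. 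Once this is secured, the analytic part (the error identity and the single $\omega(R)$ bound) is routine.
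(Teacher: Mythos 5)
Your proposal is correct and follows essentially the same route as the paper's proof: the same error identity (the paper's \eqref{eq4.3}), the same single bound $\|u_{n+1}-y\|\le \frac{m\omega(R)}{1-q}\|u_n-y\|=q_1\|u_n-y\|$ combining \eqref{eq1.4} with \eqref{eq2.1}, the same use of $q<\tfrac12$ to get $q_1<1$, and the same induction using $q_1\|z-y\|\le R$ to keep all iterates in $B(y,R)$. The one caveat you flag --- that applying $\omega(R)$ and \eqref{eq2.1} at $u_0=z$ already presupposes $z\in B(y,R)$, which the hypothesis $q_1\|z-y\|\le R$ alone does not guarantee --- is present in the paper's own argument as well, so it does not distinguish your proof from theirs.
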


\begin{proof}
One has
\begin{equation}
\label{eq4.3}
\begin{split}
u_{n+1} - y &= u_n - y - [F'(u_n)]^{-1}\int_0^1 F'(y+s(u_n-y))ds(u_n - 
y)\\
&= - [F'(u_n)]^{-1}\int_0^1 [F'(y+s(u_n-y)) - F'(u_n)]ds(u_n - y)
\end{split}
\end{equation}
Let 
$$a_n := \|u_n - y\|, \qquad a_0=\|z - y\|.$$ 
Then \eqref{eq4.2}, \eqref{eq4.3} and \eqref{eq2.1} imply
\begin{equation}
\label{eq4.4}
a_{n+1} \le \frac {m\omega(R)}{1-q}a_n \le \frac {q}{1-q} a_n:=q_1 a_n.
\end{equation}
From the assumption $q\in (0, \frac 1 2)$ one derives that $q_1\in (0,1)$. 
Thus, using \eqref{eq4.2}, one gets:
$$\|u_1 - y\|:=a_1\leq q_1a_0\leq R.$$ 
By induction one obtains:
$$\|u_n - y\|\le R, \qquad  \forall n=1,2,3,.....$$ 
Consequently, $u_n\in B(y,R)$ for all $n$, and estimates \eqref{eq2.1} and 
\eqref{eq4.4} are applicable for all $n$. 
Therefore, 
\eqref{eq4.4} implies
\begin{equation}
\label{eq4.5}
a_n \le q_1^{n-1} R, \qquad \forall n=1,2,3,.....  .
\end{equation}
Therefore, 
$$\lim_{n\to \infty}a_n=0.$$
Theorem~\ref{theorem3} is proved. 
\end{proof}

\end{document}